\documentclass[12pt]{amsart}

\usepackage{amsmath, amssymb, amsthm, amscd}
\usepackage{url}
\usepackage{graphicx}
\usepackage[hidelinks,pagebackref,pdftex]{hyperref}
\usepackage{color}
\usepackage{import}
\usepackage[margin=1.2in]{geometry}
\usepackage{physics}
\usepackage[abs]{overpic}
\usepackage{contour}

\usepackage{marginnote}
\long\def\@savemarbox#1#2{\global\setbox#1\vtop{\hsize\marginparwidth 
  \@parboxrestore\tiny\raggedright #2}}
\marginparwidth .75in \marginparsep 7pt


\renewcommand*{\backref}[1]{}
\renewcommand*{\backrefalt}[4]{
  \ifcase #1
  [No citations.]
  \or [#2]
  \else [#2]
  \fi }

\AtBeginDocument{%
   \def\MR#1{}
}

\numberwithin{equation}{section}
\theoremstyle{plain}
\newtheorem{theorem}[equation]{Theorem}

\newtheorem{lemma}[equation]{Lemma}

\newtheorem{corollary}[equation]{Corollary}

\newtheorem*{namedtheorem}{\theoremname}
\newcommand{\theoremname}{testing}

\theoremstyle{definition}

\newtheorem{example}[equation]{Example}

\newcommand{\from}{\colon} 
\newcommand{\h}{{\mathfrak{h}}}
\newcommand{\HH}{{\mathbb{H}}}
\newcommand{\RR}{{\mathbb{R}}}
\newcommand{\ZZ}{{\mathbb{Z}}}
\newcommand{\NN}{{\mathbb{N}}}
\newcommand{\CC}{{\mathbb{C}}}
\newcommand{\QQ}{{\mathbb{Q}}}

\newcommand{\Zw}{{\mathbb{Z}[\omega]}}
\newcommand{\PSLw}{{\mathrm{PSL}_2(\mathbb{Z}[\omega])}}
\newcommand{\SLw}{{\mathrm{SL}_2(\mathbb{Z}[\omega])}}
\newcommand{\PSLC}{{\mathrm{PSL}_2(\mathbb{C})}}
\newcommand{\SLC}{{\mathrm{SL}_2(\mathbb{C})}}

\newcommand{\Ii}{{\mathcal{I}}}
\newcommand{\Bb}{{\mathcal{B}}}
\newcommand{\Cc}{{\mathcal{C}}}
\newcommand{\Dd}{{\mathcal{D}}}
\newcommand{\Pp}{{\mathcal{P}}}

\newcommand{\Oo}{{\mathcal{O}}}
\newcommand{\Ll}{{\mathcal{L}}}

\newcommand{\Gm}{{\Gamma}}
\newcommand{\gm}{{\gamma}}
\newcommand{\mg}{{\omega}}
\newcommand{\kp}{{\kappa}}
\newcommand{\md}{{\lambda}}
\newcommand{\af}{{\alpha}}
\newcommand{\dt}{{\delta}}

\newcommand{\xe}{{\smqty(\xi\\ \eta)}}
\newcommand{\xef}{{\frac{\xi}{\eta}}}

\newcommand{\bdy}{\partial}

\newcommand{\To}{\longrightarrow}

\newcommand{\refthm}[1]{Theorem~\ref{Thm:#1}}

\newcommand{\refeqn}[1]{\eqref{Eqn:#1}}

\newcommand{\refsec}[1]{Section~\ref{Sec:#1}}
\newcommand{\reffig}[1]{Figure~\ref{Fig:#1}}

\title{Lambda Lengths In The Figure Eight Knot Complement}
\author{Joshua A. Howie} 
\address{School of Mathematics, Monash University, 9 Rainforest Walk, Clayton VIC 3800, Australia}
\email{josh.howie@monash.edu}

\author{Dionne Ibarra}
\address{School of Mathematics, Monash University, 9 Rainforest Walk, Clayton VIC 3800, Australia}
\email{dionne.ibarra@monash.edu}

\author{Daniel V. Mathews}
\address{School of Mathematics, Monash University, 9 Rainforest Walk, Clayton VIC 3800, Australia; School of Physical and Mathematical Sciences, Nanyang Technological University, 21 Nanyang Link, Singapore 637371}
\email{dan.v.mathews@gmail.com}

\author{Lecheng Su}
\address{School of Mathematics, Monash University, 9 Rainforest Walk, Clayton VIC 3800, Australia}
\email{lecheng.su@monash.edu}

\thanks{\today}

\begin{document}

\begin{abstract}
In the complete hyperbolic structure on the complement of the figure eight knot, we determine the set of lambda lengths from the maximal cusp to itself. Using the correspondence between spinors and spin-decorated horospheres, we show that these lambda lengths are precisely the Eisenstein integers, up to multiplication by a unit. We also show that the inter-cusp distances from the maximal cusp to itself are precisely the norms of Eisenstein integers.
\end{abstract}

\maketitle


\section{Introduction}

In \cite{mat23}, the third author introduced a correspondence between certain types of spinors --- the \emph{spin-vectors} of Penrose and Rindler \cite{Penrose_Rindler84} ---  and horospheres in hyperbolic 3-space $\HH^3$ with certain decorations. This construction, combining Penrose and Rindler's work with that of Penner \cite{Penner87}, has been applied to generalise Descartes' circle theorem \cite{MatZym23}, and extends to higher dimensions \cite{MatVar}. In this paper, we show that the spinor--horosphere correspondence can be used to calculate lambda lengths and inter-cusp distances in cusped hyperbolic 3-manifolds, using the example of the figure eight knot complement.

Throughout this paper, $K$ is the figure-eight knot in $S^3$, and $M = S^3 \setminus K$ is its complement. The unique complete hyperbolic structure on $M$ is given by a developing map $\mathfrak{D} \colon \widetilde{M} \To \HH^3$, unique up to isometry in $\HH^3$. There is a reasonably canonical choice of $\mathfrak{D}$ which appears in the literature, and we describe it below.

A cusp neighbourhood of $M$ appears via $\mathfrak{D}$ as an infinite family of horoballs in $\HH^3$. For a small enough cusp neighbourhood, these horoballs are disjoint; expanding the cusp neighbourhood until there is a tangency, we obtain \emph{maximal horoballs} corresponding to a \emph{maximal cusp neighbourhood} of $K$. We call the horospheres bounding these maximal horoballs the \emph{canonical horospheres} of $M$. Some pairs are tangent, by definition, but all other pairs are disjoint. A natural question arising is, what are the distances between these horospheres?

Given the spinor--horosphere correspondence, we can also ask, what are the spinors corresponding to the canonical horospheres of $M$? This correspondence requires a decoration on the horospheres, but there are natural choices, as we discuss below.

An \emph{intercusp geodesic} $\gamma$ is then the image in $M$ of a shortest geodesic in $\HH^3$ from one canonical horosphere to another. Since $M$ has only one cusp, and the geometry on the cusp is Euclidean, there is a well-defined notion of direction along the cusp, and we can ask: after parallel translation along $\gamma$, how are directions along the cusp affected? \emph{Lambda lengths} contain this information, and can be calculated straightforwardly from spinors, as discussed in \cite{mat23}.

In this paper, we answer all the above questions, proving the following three theorems.

\begin{theorem}
\label{Thm:Zw_horospheres}
If $\smqty(\xi\\ \eta)\in\CC^2$ is a spinor arising in the canonical hyperbolic structure on $S^3\setminus K$, then $\xi$ and $\eta$ are relatively prime Eisenstein integers. Moreover, given a pair of relatively prime Eisenstein integers $(\xi,\eta)$, there exists a unit Eisenstein integer $u$ such that $u\smqty(\xi\\ \eta)$ is a spinor arising in the canonical hyperbolic structure on $S^3\setminus K$.
\end{theorem}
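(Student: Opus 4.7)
The plan is to identify the spinors of canonical horospheres with (up to a fixed global unit) the first columns of matrices in a lift of $\pi_1(M)$ into $\SLw$, and then invoke the Euclidean-domain structure of $\Zw$. The preliminary I would first record is the classical fact that the holonomy of the canonical hyperbolic structure conjugates $\pi_1(M)$ into an index-$12$ subgroup of the Bianchi group $\PSLw$, fixing the cusp at $\infty\in\partial\HH^3$, with cusp translations acting as the lattice $\Zw$ (the hexagonal cusp). Fixing a lift to $\SLw$ and taking as base spinor $\sigma_0=u_0\smqty(1\\ 0)$ the spinor of the canonical horosphere at $\infty$ (for some unit $u_0\in\Zw^*$ determined by the decoration and the normalisation of $\mathfrak{D}$), the spinor--horosphere correspondence of \cite{mat23} identifies the spinors arising in the canonical structure with the orbit $\{\gm\sigma_0:\gm\in\pi_1(M)\}$.

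For the forward direction, if $\gm=\smqty(a&b\\ c&d)\in\SLw$ then $\gm\sigma_0=u_0\smqty(a\\ c)$ has entries in $\Zw$, and the identity $ad-bc=1$ forces $\gcd(a,c)=1$ by Bezout in the Euclidean ring $\Zw$, so the entries of $\gm\sigma_0$ are relatively prime Eisenstein integers. For the backward direction, given coprime $(\xi,\eta)\in\Zw^2$, Bezout produces some $\smqty(\xi&*\\ \eta&*)\in\SLw$ sending $\infty$ to $\xi/\eta$, so $\xi/\eta$ lies in the $\PSLw$-orbit of $\infty$. Because $M$ covers the Bianchi orbifold $\PSLw\backslash\HH^3$ and both have exactly one cusp (at the class of $\infty$), the $\pi_1(M)$-orbit of $\infty$ on $\partial\HH^3$ must equal the full Bianchi orbit $\PSLw\cdot\infty=\QQ(\mg)\cup\{\infty\}$. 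Hence some $\gm=\smqty(a&b\\ c&d)$ in the lifted $\pi_1(M)$ satisfies $a/c=\xi/\eta$; coprimality of both $(a,c)$ and $(\xi,\eta)$ in $\Zw$ then forces $(a,c)=v(\xi,\eta)$ for a unit $v\in\{\pm1,\pm\mg,\pm\mg^2\}$, so $\gm\sigma_0=(u_0 v)\smqty(\xi\\ \eta)$ is a spinor arising in the canonical structure, with $u=u_0 v$ the required unit.

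The main obstacle is the bookkeeping around the base spinor and the $\PSLw\to\SLw$ lift of $\pi_1(M)$: the ``unit multiple'' ambiguity in the statement absorbs both the $\pm 1$ ambiguity of this lift and the $\ZZ/6$ rotational ambiguity in the natural decoration of the base horosphere, reflecting the hexagonal symmetry of the cusp. Once these conventions are pinned down consistently with the developing map $\mathfrak{D}$ as described earlier in the paper, the remainder of the argument is elementary: Bezout in the Euclidean ring $\Zw$, and the observation that the $\pi_1(M)$- and $\PSLw$-orbits of $\infty$ coincide.
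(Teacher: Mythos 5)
Your proof is correct, but the ``moreover'' direction takes a genuinely different route from the paper. For the forward direction you and the paper do essentially the same thing (you use $ad-bc=1$ directly to get coprimality of the first column, while the paper tracks the gcd through applications of the generators $A^{\pm1},B^{\pm1}$; your version is marginally slicker and both are fine). For the converse, the paper gives a constructive descent: it covers a fundamental domain $\Dd$ for $\Gm_\infty$ by the closed disks bounded by the isometric circles $\Ii(B)$ and $\Ii(B^{-1})$, and repeatedly applies $B^{\pm1}$ (to strictly decrease $N(\eta)$, i.e.\ raise the horosphere) or an element of $\Gm_\infty$ (to return to $\Dd$), terminating at $\smqty(u\\0)$ --- effectively a geometric Euclidean algorithm that produces an explicit word in $A,B$. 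You instead argue non-constructively: Bezout in the PID $\Zw$ puts $\xi/\eta$ in the $\PSLw$-orbit of $\infty$, and since $\Gm$ has finite index in $\PSLw$ and $S^3\setminus K$ has a single cusp, the $\Gm$-orbit of $\infty$ already exhausts $\QQ(\mg)\cup\{\infty\}$; matching first columns of the resulting $\gm\in\SLw$ against $(\xi,\eta)$ via coprimality then forces agreement up to a unit. This is valid, but note that it leans on the external input that $\Gm$ is a finite-index subgroup of $\PSLw$ (cited in the paper to Hatcher and Riley) together with the standard fact that parabolic fixed points of a finite-index subgroup coincide with those of the ambient group --- you should make that step explicit rather than compressing it into ``both have exactly one cusp.'' The paper's argument avoids this input entirely and is self-contained and algorithmic; in fact the paper derives the transitivity of $\Gm$ on $\QQ(\mg)\cup\{\infty\}$ (its Corollary~\ref{Cor:horosphere_centres}) as a \emph{consequence} of the theorem, whereas you use that transitivity as the engine of your proof, so you must be careful not to make the logic circular and must source it independently. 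What your approach buys is brevity and a clean conceptual reason why the answer is ``all coprime pairs up to units''; what the paper's buys is an explicit algorithm for producing the group element $W$, with no reliance on the index-12 fact.
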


\begin{theorem}
\label{Thm:lambda_lengths_horospheres}
The $\md$-length between any two spin-decorated horospheres in the canonical hyperbolic structure on $S^3\setminus K$ is an Eisenstein integer. Moreover, for any Eisenstein integer $\alpha$,
there exists a unit Eisenstein integer $u$ such that $u\alpha$ is the $\md$-length between two spin-decorated horospheres in the canonical hyperbolic structure on $S^3\setminus K$. 
\end{theorem}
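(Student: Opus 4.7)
The plan is to invoke the determinantal formula for $\md$-lengths from the spinor--horosphere correspondence of \cite{mat23}: for two spin-decorated horospheres with corresponding spinors $\kp_1 = \smqty(\xi_1 \\ \eta_1)$ and $\kp_2 = \smqty(\xi_2 \\ \eta_2)$, the $\md$-length between them is the spinor bracket
\[
\{\kp_1, \kp_2\} \;=\; \xi_1 \eta_2 - \xi_2 \eta_1.
\]
With this formula in hand, \refthm{Zw_horospheres} reduces both halves of the present statement to brief algebraic checks.

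For the forward direction, suppose $\kp_1$ and $\kp_2$ are spinors arising in the canonical hyperbolic structure on $S^3\setminus K$. By \refthm{Zw_horospheres}, each of $\xi_1, \eta_1, \xi_2, \eta_2$ lies in $\Zw$, since a unit of $\Zw$ times a pair of relatively prime Eisenstein integers still has entries in $\Zw$. As $\Zw$ is closed under addition and multiplication, the bracket $\xi_1 \eta_2 - \xi_2 \eta_1$ is itself an Eisenstein integer, giving the first assertion.

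For the converse, fix any $\af \in \Zw$ and consider the coprime pairs $(1, 0)$ and $(1, \af)$ (both relatively prime in $\Zw$, trivially, since $1$ is a unit). \refthm{Zw_horospheres} then furnishes unit Eisenstein integers $u_1, u_2$ such that $u_1 \smqty(1 \\ 0)$ and $u_2 \smqty(1 \\ \af)$ both arise as spinors in the canonical hyperbolic structure. Their spinor bracket is
\[
\bigl\{u_1 \smqty(1 \\ 0),\; u_2 \smqty(1 \\ \af)\bigr\} \;=\; u_1 u_2 \bigl( 1 \cdot \af - 0 \cdot 1 \bigr) \;=\; u_1 u_2 \, \af,
\]
so taking $u = u_1 u_2$, which is again a unit of $\Zw$, realises $u\af$ as the $\md$-length between two spin-decorated horospheres.

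The substantive content is essentially entirely contained in \refthm{Zw_horospheres}: once the spinors of the canonical horospheres have been identified with unit multiples of relatively prime pairs in $\Zw$, the $\md$-length assertion reduces to closure of $\Zw$ under ring operations, together with the choice of two explicit coprime pairs whose bracket equals $\af$. The main obstacle to overcome is therefore not this theorem but the preceding one, after which \refthm{lambda_lengths_horospheres} is essentially immediate.
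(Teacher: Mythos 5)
Your proposal is correct and follows essentially the same route as the paper: the forward direction is the observation that canonical spinors have Eisenstein-integer entries (by \refthm{Zw_horospheres}) so the bracket $\{\kp,\kp'\}=\xi_1\eta_2-\xi_2\eta_1$ lies in $\Zw$, and the converse realises $u\af$ as the bracket of spinors over the coprime pairs $(1,0)$ and $(1,\af)$. The only cosmetic difference is that the paper uses $\smqty(1\\0)$ directly as the chosen base canonical spinor (so only one unit appears), whereas you invoke \refthm{Zw_horospheres} a second time to get $u_1\smqty(1\\0)$; both yield a unit multiple of $\af$ as required.
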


\begin{theorem}
\label{Thm:horosphere_distances}
The set of hyperbolic distances between canonical horospheres in the complete hyperbolic structure on $S^3\setminus K$ is precisely the set $\{2\log{|\alpha|}\mid \alpha\in\Zw\setminus\{0\}\}$.
\end{theorem}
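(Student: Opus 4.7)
The plan is to derive this theorem as a direct corollary of \refthm{lambda_lengths_horospheres} together with the fundamental relationship between $\md$-length and hyperbolic distance. Recall from Penner's theory, adapted to spin-decorated horospheres in \cite{mat23}, that the $\md$-length between two spin-decorated horospheres $H_1, H_2$ is a complex number whose modulus satisfies $|\md| = \exp(\rho/2)$, where $\rho$ is the signed hyperbolic distance between $H_1$ and $H_2$ (nonnegative when the horoballs are disjoint or tangent). Equivalently, $\rho = 2 \log |\md|$. The first step of the proof would be simply to invoke this formula.

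Next I would observe that changing the spin decoration on a horosphere rotates the associated spinor (and hence the $\md$-length) by a complex number of modulus one, so $|\md|$ depends only on the underlying pair of horospheres, not on any chosen decoration. Therefore the set of hyperbolic distances realized between canonical horospheres is exactly $\{2 \log |\md| : \md \text{ a realised } \md\text{-length}\}$.

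Now I would apply \refthm{lambda_lengths_horospheres}: every $\md$-length between canonical spin-decorated horospheres is an Eisenstein integer, and conversely every nonzero Eisenstein integer $\af$ is realised up to multiplication by some Eisenstein unit $u \in \Zw^\times = \{\pm 1, \pm \mg, \pm \mg^2\}$. Since each such unit has $|u| = 1$, we have $|u\af| = |\af|$, so the set of realised moduli is precisely $\{|\af| : \af \in \Zw \setminus \{0\}\}$, and the set of realised hyperbolic distances is exactly $\{2 \log |\af| : \af \in \Zw \setminus \{0\}\}$, as claimed. The distance $0$ arises from tangent pairs of canonical horospheres, corresponding to unit Eisenstein integers $\af$.

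I do not expect any substantial obstacle, since the work has already been done in \refthm{lambda_lengths_horospheres}; the only care needed is in bookkeeping. Specifically, one must confirm that the \emph{canonical horospheres} appearing in the statement of \refthm{horosphere_distances} are the same underlying (undecorated) horospheres as those decorated in \refthm{lambda_lengths_horospheres}, and one must verify that the tangent case ($\rho = 0$, $|\md| = 1$, $\af$ a unit) is correctly included. With those remarks in place, the theorem follows in a short paragraph from the previous result.
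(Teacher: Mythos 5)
Your proposal is correct and follows essentially the same route as the paper: the paper's proof likewise takes the real part of $d = 2\log\md$ to get $\rho = 2\log|u\af| = 2\log|\af|$, invoking \refthm{lambda_lengths_horospheres} for both containments, with $|u|=1$ disposing of the unit ambiguity exactly as you do. The only difference is that the paper also records a second, independent verification by directly integrating $\int \frac{1}{t}\dd{t}$ between the horosphere at height $1$ centred at $\infty$ and one of height $|\eta|^{-2}$, which your argument does not need.
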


Note for $\alpha \in \Zw$ we have $2\log{|\alpha|}= \log{(|\alpha|^2)} = \log N(\alpha)$, where $N(\alpha) = |\alpha|^2$ is the number-theoretic norm of $\alpha$. The norms of Eisenstein integers are well known (e.g. \cite[sec. 9.1]{IreRos90}). Every prime in $\Zw$ has norm of one of the following forms: (i) $p$, where $p$ is a rational prime such that $p \equiv 1$ mod $3$; (ii) $p^2$, where $p$ is a rational prime such that $p \equiv 2$ mod $3$; (iii) $3$. So the final theorem has the following more explicit corollary.
\begin{corollary}
The set of hyperbolic distances between canonical horospheres in the complete hyperbolic structure on $S^3 \setminus K$ is precisely the set
\[\left\{ \log n \mid n = \prod_{p} p^{k_p}, \quad p \text{ a rational prime}, \quad k_p \in \NN\cup\{0\},\quad k_p \text{ even for } p \equiv 2 \text{ mod 3}. \right\}\]
\qed
\end{corollary}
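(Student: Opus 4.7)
The plan is to reduce the corollary immediately to \refthm{horosphere_distances} and then to a purely number-theoretic statement about which positive integers arise as norms of nonzero Eisenstein integers. By \refthm{horosphere_distances}, the set of distances is $\{2\log|\alpha| : \alpha \in \Zw \setminus \{0\}\} = \{\log N(\alpha) : \alpha \in \Zw \setminus \{0\}\}$, so everything reduces to characterising the image of the norm map $N \colon \Zw \setminus \{0\} \to \NN$.

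Next, I would invoke the two standard facts already recalled in the preceding paragraph: that $\Zw$ is a unique factorisation domain, and that every Eisenstein prime has norm of the form $p$ (when $p\equiv 1 \pmod 3$), $p^2$ (when $p\equiv 2 \pmod 3$), or $3$. Since $N$ is multiplicative, writing $\alpha = u\prod_i \pi_i^{e_i}$ as a product of a unit and Eisenstein primes gives $N(\alpha) = \prod_i N(\pi_i)^{e_i}$, which, regrouping by the underlying rational prime, is exactly an expression of the form $n = \prod_p p^{k_p}$ where $k_p$ is even whenever $p\equiv 2 \pmod 3$. This shows one inclusion.

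For the reverse inclusion, given any such $n$, I would construct an $\alpha \in \Zw$ with $N(\alpha) = n$ by building it prime-by-prime: for each rational prime $p \equiv 1 \pmod 3$ with exponent $k_p$, pick an Eisenstein prime $\pi$ of norm $p$ and take $\pi^{k_p}$; for each $p \equiv 2 \pmod 3$ (with necessarily even exponent $k_p = 2\ell_p$), take $p^{\ell_p}$, which has norm $p^{2\ell_p}$; for $p = 3$, take a power of the ramified prime $1 - \mg$ (which has norm $3$). Multiplying these together yields an Eisenstein integer of norm $n$.

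The main obstacle is really no obstacle at all: the result is a direct combination of \refthm{horosphere_distances} with the well-known structure theorem for primes in $\Zw$, so the proof is essentially bookkeeping and may reasonably be relegated to a one-line remark citing the reference \cite[sec. 9.1]{IreRos90} already given in the text.
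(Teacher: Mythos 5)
Your proposal is correct and follows exactly the route the paper intends: the corollary is stated with an immediate \qed because it is precisely the combination of Theorem~\ref{Thm:horosphere_distances} with the multiplicativity of the norm and the classification of Eisenstein primes by norm recalled in the preceding paragraph. Your explicit prime-by-prime construction for the reverse inclusion is the standard bookkeeping the paper leaves to the reader.
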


Previous work has studied intercusp distances. For instance, in 2014 Thistlethwaite and Tsvietkova \cite{ThiTsv14} gave a method for calculating hyperbolic structures on link complements, using variables called \emph{edge labels} and \emph{crossing labels}. The latter, also known as \emph{intercusp parameters}, were shown by Neumann and Tsvietkova \cite{NeuTsv16} in 2016 to be closely related to the invariant trace field. In our language, an intercusp parameter is a lambda length raised to the power of $-2$.

In 2018 Kim, Kim and Yoon \cite{KKY18} studied octahedral decomposition of knot complements, and their hyperbolic geometry, using these parameters (there discussed as \emph{segment} and \emph{crossing} labels). In the 2023 sequel \cite{KKY23}, they studied the relationship of intercusp parameters (there described as \emph{long-edge parameters}) to Ptolemy coordinates and varieties.

It is interesting to compare these results with those of McShane~\cite{mcs24}, relating $\md$-lengths on the thrice-punctured sphere with Eisenstein integers.

\subsection*{Structure of this paper}
This paper is organised as follows. We first recall facts needed for the proofs: Eisenstein intgers in \refsec{Zw}; the figure eight knot complement in \refsec{fig8}; and spinors, in \refsec{spinors}. In \refsec{holonomy_action} we study the action of the holonomy group on horospheres via isometric circles, and use this in \refsec{Zw_horospheres} to prove \refthm{Zw_horospheres}, that relatively prime Eisenstein integer spinors correspond to the desired horospheres. In \refsec{lambda_lengths} we consider lambda lengths between spin-decorated horospheres, and prove \refthm{lambda_lengths_horospheres} and \refthm{horosphere_distances}. Finally, in \refsec{questions} we raise some directions for further research.

\subsection*{Acknowledgments}
This work grew out of a research collaboration at the MATRIX workshop ``Low Dimensional Topology: Invariants of Links, Homology Theories, and Complexity" in June 2024. The authors would like to acknowledge all those who participated in discussions over the course of the workshop: Grace Garden, Connie On Yu Hui, Adele Jackson, Emma McQuire, Jessica Purcell, Corbin Reid, and Em Thompson. We would also like to acknowledge those who have participated in further discussions at Monash University on related topics, including Nicola Harif and Orion Zymaris. This work was supported by ARC grant DP210103136.


\section{Eisenstein Integers}
\label{Sec:Zw}

The following facts about Eisenstein integers are standard; we refer to any standard number theory text, such as \cite{IreRos90}, for details.

Let $\mg=\frac{-1}{2}+\frac{\sqrt{3}}{2}i\in\CC$, so that $\mg^2+\mg+1=0$ and $\mg$ is a primitive third root of unity. Then $\Zw$ is the ring of Eisenstein integers, and $\Zw\cong\mathcal{O}_3$ is the ring of integers in the field $\QQ(\mg)\cong\QQ(\sqrt{-3})$. The group of units in $\Zw$ is $U=\{\pm 1,\pm\mg,\pm\mg^2\}$. 
Note $\overline{\mg}=\mg^2$.

The ring $\Zw$ is a Euclidean domain. A Euclidean valuation is given by the \emph{norm} $N\from\Zw\to\NN\cup\{0\}$, where
\[N(a+b\mg)=|a+b\mg|^2=(a+b\mg)\overline{(a+b\mg)}=(a+b\mg)(a+b\mg^2)=a^2-ab+b^2,\]
for each $a+b\mg\in\Zw$.
This also implies that $\Zw$ is a unique factorisation domain and a principal ideal domain. 

Up to multiplication by a unit, the primes in $\Zw$ are $1-\mg$, the rational primes $p\in\NN$ such that $p\equiv 2\mod{3}$, and $a+b\mg$ and $a+b\mg^2$ such that $a,b \in \ZZ$ satisfy $a^2-ab+b^2=q$ where $q\in\NN$ is a rational prime and $q\equiv 1\mod{3}$. Note that $3$ is not prime in $\Zw$ since $3=(2+\mg)(1-\mg)=-\mg^2(1-\mg)^2$.

Two Eisenstein integers are relatively prime if their only common divisors are units in $\Zw$. Since $\Zw$ is a Euclidean domain, there is a Euclidean algorithm to compute the greatest common divisor of two Eisenstein integers.

The group $\PSLw$ is isomorphic to the Bianchi group $\mathrm{PSL}_2(\mathcal{O}_3)$. Here\\ $\PSLw\cong\SLw/\{\pm I\}$.


\section{Figure Eight Knot Complement}
\label{Sec:fig8}

Let $K\subset S^3$ be the figure eight knot.  We work in the upper half-space model of hyperbolic $3$-space  which we refer to as $\HH^3$. Then $\bdy\HH^3=\hat{\CC}=\CC\cup\{\infty\}$. It is well known that $S^3 \setminus K$ has a complete hyperbolic structure, unique up to isometry; see~\cite{pur20}.

The fundamental group of $S^3\setminus K$ is given by the Wirtinger presentation coming from a reduced alternating diagram as
\[\pi_1(S^3\setminus K)=\langle x,y\mid y=wxw^{-1}\rangle,\]
where $w=x^{-1}yxy^{-1}$. Riley~\cite{ril75} showed that there is a discrete faithful representation $\rho\from\pi_1(S^3\setminus K)\to\SLC$ where
\begin{equation}
\label{Eqn:AB_defn}
A=\rho(x)=\mqty(1&1\\0&1),
\quad
 B=\rho(y)=\mqty(1&0\\-\mg&1).
\end{equation}
Letting $\pi\from\SLC\to\PSLC$ be the map that sends $W$ to $\pm W$, then $\pi\circ\rho$ is a discrete faithful $\PSLC$-representation of $\pi_1(S^3\setminus K)$.
(This representation coincides with that of~\cite[p60]{mr03}, but is different from ~\cite[Equation~5.1]{pur20}.)

Let $\Gm=\rho(\pi_1(S^3\setminus K))$. Then
\[\Gm=\langle A,B\mid B=CAC^{-1}\rangle,\]
where $C=\rho(w) = A^{-1}BAB^{-1}=\smqty(0&\mg\\1+\mg&1-\mg)$. Note that $\Gm$ is a subgroup of $\SLw$.

By an abuse of notation, we will also denote $\pi(\Gm)$ by $\Gm$ since $\pi(\Gm)\cong\Gm$.
Then $\Gm$ is the holonomy group of $S^3\setminus K$, so $S^3\setminus K\cong \HH^3/\Gm$. Note that $\Gm$ is an index 12 subgroup of $\PSLw$; see~\cite[Rmk 2]{hat83} or~\cite{ril75}. 

A fundamental domain for the action of $\Gm$ on upper half-space is given by the union of two regular ideal tetrahedra with vertices on $\hat{\CC}$ at $0,1,-\mg^2,\infty$, and $0,\mg,-\mg^2,\infty$ respectively. (Note $-\omega^2 = \frac{1}{2}+\frac{\sqrt{3}}{2} i = e^{i\pi/3}$.) The face-pairing isometries on the fundamental domain for $\Gm$ are given by
\[
A=\mqty(1&1\\0&1), \quad
D=\mqty(1&-1\\-\mg&1+\mg), \quad
E=\mqty(1-\mg&-1\\\mg^2&1+\mg),
\]
where $D = BA^{-1}$ is loxodromic, and $E = BA^{-1}B^{-1}$ is parabolic fixing $-\mg^2$. Indeed, one can check that $A$ sends $(0,\omega,\infty) \mapsto (1,-\omega^2,\infty)$, $D$ sends $(0,1,\infty) \mapsto (\omega,0,-\omega^2)$ and $E$ sends $(0,1,-\omega^2) \mapsto (\omega,\infty,-\omega^2)$.

\begin{figure}[h]
	\centering
	\includegraphics[scale=1.5]{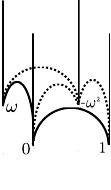}
	\caption{The fundamental domain of the figure eight knot complement consists of two regular ideal tetrahedra.}
	\label{Fig:fig8fundreg}
\end{figure}

This fundamental domain, together with face-pairing maps and holonomy representation, yields a developing map $\mathfrak{D}$ for the complete hyperbolic structure on $S^3 \setminus K$. We refer to this data as the \emph{canonical hyperbolic structure} on $S^3 \setminus K$.

Define $\tilde{C}=AB^{-1}A^{-1}B=\smqty(0&-\mg\\\mg^2&1-\mg)$, and define $L=\tilde{C}^{-1}C=\smqty(-1&2+4\mg\\0&-1)$. Then $L$ is the image under $\rho$ of a longitude of $K$. (Maclachlan-Reid~\cite{mr03} has $L=\smqty(1&-2-4\mg\\0&1)$, which is only true in $\PSLC$. Riley~\cite{ril75} uses $L^{-1}$.)

Let $\Gm_\infty$ be the subgroup of $\Gm$ which stabilises infinity. Then $\Gm_\infty=\langle A,L\rangle\cong\ZZ^2$. Let $\Gm_0$ be the subgroup of $\Gm$ which stabilises $0$. Then $\Gm_0=C\Gm_\infty C^{-1}=\langle B, CLC^{-1}\rangle$.
Note that $A$, $B$, and $L$ are all parabolic elements of $\PSLw$, and have trace $\pm 2$.

The developing image of (the lifts of) a sufficiently small cusp neighbourhood in $S^3 \setminus K$ is an infinite collection of disjoint horoballs in $\HH^3$, centred at the ideal vertices of the fundamental domain, and their translates under the holonomy group $\Gamma$. We expand such a cusp neighbourhood until these horoballs first become tangent, or equivalently, until the cusp neighbourhood first becomes self-tangent. This yields a \emph{maximal cusp}. The developing image of the boundary of this maximal cusp is a collection of horospheres in $\HH^3$, again centred at each ideal vertex of the fundamental domain, and its translates. Among these horospheres, each pair is either tangent or disjoint. We refer to these horospheres in the canonical hyperbolic structure as \emph{canonical horospheres} of $S^3 \setminus K$.

In the fundamental domain, the canonical horospheres appear as (a portion of) a Euclidean plane at height $1$ (centred at $\infty$), and (portions of) Euclidean spheres of diameter $1$ (centred at $0,1,-\omega^2,\omega$). Indeed, the face-pairing transformations $A,D,E$ send the horospheres at height or diameter $1$ at each vertex of each face, to the corresponding height or diameter $1$ horospheres at the vertices of the paired face. (This is most easily seen using spinors for the horospheres as described below in \refsec{spinors}: $A$ sends 
$\smqty(0 \\ 1), \smqty(\omega \\ 1), \smqty (1 \\ 0) \mapsto \smqty (1 \\ 1), \smqty(-\omega^2 \\ 1), \smqty(1 \\ 0)$; $D$ sends $\smqty(0 \\ 1), \smqty ( 1 \\ 1 ), \smqty(1 \\ 0) \mapsto \smqty( -1 \\ -\omega^2), \smqty( 0 \\ 1 ), \smqty(1 \\ -\omega)$; $E$ sends $\smqty(0\\1), \smqty(1\\1), \smqty(-\omega^2\\1) \mapsto \smqty( -1 \\ -\omega^2), \smqty(-\omega\\0), \smqty(-\omega^2\\1)$.)

Translates of these horospheres under $\Gamma_\infty$ then ensure that the canonical horospheres contain a horosphere at each point of $\Zw\subset\CC$. Taking all translates under $\Gamma$, the canonical horospheres consist of one centred at every point of $\QQ(\mg)\cup\{\infty\}$; see~\cite{hat83} or Corollary~\ref{Cor:horosphere_centres} below.


\section{Spinors}
\label{Sec:spinors}

For our purposes, a \emph{spinor} or \emph{spin-vector} is just a pair of complex numbers, not both zero. The third author~\cite{mat23} has shown that there is a bijective correspondence between spinors and spin-decorated horospheres in $\HH^3$. 

As a horosphere $\h$ is isometric to the Euclidean plane, there is a well-defined notion of parallel direction field on $\h$. A \emph{decoration} is a parallel direction field on $\h$, or equivalently, a parallel unit tangent vector field on $\h$. A decoration on $\h$ naturally determines two frame fields on $\h$, given by $(N, V, N \times V)$ where $N$ is one of the two smooth unit normal vector fields to $\h$, $V$ is the parallel unit tangent vector field of the decoration, and $\times$ is the cross product, producing a right-handed orthonormal frame. Choosing $N$ to point into, or out of, $\HH^3$ (i.e. out of the horoball bounded by $\h$, or into it), the decoration determines an \emph{inward} and \emph{outward frame field} $F^{in}, F^{out}$ along $\h$. Each frame field has two lifts to the double cover of the frame bundle of $\HH^3$. An \emph{inward} or \emph{outward} spin-decoration on $\h$ is a continuous lift of $F^{in}$ or $F^{out}$ to the double cover of the frame bundle. Each of the two inward spin-decorations has an associated outward spin-decoration, and vice versa: from an outward spin-decoration $W^{out}$, the associated inward spin-decoration is obtained by rotating $W^{out}$ by $\pi$ about $V$ at each point of $\h$; and from an inward spin-decoration $W^{in}$, the associated outward spin-decoration is obtained by rotating $W^{in}$ by $-\pi$ about $V$ at each point of $\h$. A \emph{spin-decoration} on $\h$ is a pair $(W^{in}, W^{out})$ of associated inward and outward spin-decorations. See \cite{mat23} for further details.

A spin-decoration on $\h$ determines a decoration on $\h$. A decoration on $\h$ naturally lifts to two spin-decorations, related by $2\pi$ rotation. For the purposes of finding distances between horospheres, and for much of this paper, it suffices to consider decorations only. Spin vectors correspond bijectively with spin-decorated horospheres; spin vectors up to sign correspond bijectively with decorated horospheres.

If $\eta\neq0$, then as proved in \cite{mat23}, the spinor $\smqty(\xi\\ \eta)$ corresponds to the horosphere $\h$ centred at $\xi/\eta$  of height $|\eta|^{-2}$ in the upper half space model.
At the point of $\h$ which is highest in the upper half space model, or ``north pole", the tangent plane of $\h$ is parallel to $\CC$ and hence can be identified with $\CC$. We can therefore describe directions on $\h$ by nonzero complex numbers. The spin-decoration on $\h$ projects to the decoration given by a direction field at the north pole of $\h$ in the direction $i/\eta^2$. 

If $\eta = 0$, The spinor $\smqty(\xi \\ \eta) = \smqty(\xi\\0)$ corresponds to the horosphere $\h$ centred at $\infty$ of height $|\xi|^2$. Since this horosphere appears in the upper half space model as parallel to $\CC$, directions along it can be described by nonzero complex numbers. The spin-decoration on $\h$ projects to the decoration given ay a line field in the direction $i\xi^2$. 

This correspondence extends to an $SL_2(\CC)$-equivariant action on both spinors, and spin-decorated horospheres.

If we endow one canonical horosphere of $S^3 \setminus K$ with a spin-decoration, then every other canonical horosphere in the canonical hyperbolic structure on $S^3 \setminus K$ also obtains a spin-decoration, by equivariance under the action of the holonomy group $\Gamma \subset \SLC$.

Recall that the subgroup $\Gamma_\infty$ of $\Gamma$ is that which stabilises infinity, and hence stabilises the canonical horosphere $\h_\infty$ at infinity (which is a Euclidean plane at height $1$ in the upper half space model). The action of $\Gamma_\infty = \langle A,L \rangle$ preserves $\h_\infty$, and preserves any direction on $\h_\infty$, but does \emph{not} preserve a spin decoration on the horosphere. As $A$ has trace $2$, it does preserve spin decorations on $\h_\infty$, e.g. sending $\smqty(1 \\ 0) \mapsto \smqty(1 \\ 0)$. However, $L$ has trace $-2$, so does not preserve spin-decorations, and sends a spin-decoration to the spin-decoration related by a $2\pi$ rotation: indeed, $L$ sends $\smqty (1 \\ 0) \mapsto \smqty(-1 \\ 0)$. However, $\Gamma_\infty$ does stabilise this pair of spin-decorations, which correspond to a decoration on $\h_\infty$.

Thus, if we choose one spin-decoration on one canonical horosphere of $S^3 \setminus K$, then by equivariance under the holonomy group $\Gamma$, each canonical horosphere of $S^3 \setminus K$ naturally obtains \emph{two} spin-decorations, which project to the same decoration.

We choose the spinor $\smqty(1\\0)$, which corresponds to the horosphere $z=1$ centred at $\infty$ with spin-decoration described by $i$. The orbit of $\smqty(1\\0)$ under $\Gamma$ thus corresponds to a pair of spin-decorations, related by $2\pi$ rotation, (and hence a decoration,) on each canonical horosphere of $S^3 \setminus K$. We refer to these spinors as the \emph{canonical spinors} of $S^3 \setminus K$.


\section{Actions of $\Gm$ on $\hat{\CC}$ and $\CC^2$}
\label{Sec:holonomy_action}

Note that $\PSLC$ acts on $\hat{\CC}=\CC\cup\{\infty\}$ by M\"obius transformations, which extends to an action on upper half-space $\HH^3$. It follows that $\PSLw$ and hence $\Gm$ act on $\hat{\CC}$ by M\"obius transformations. Under this action, each element of $\Gm_\infty$ acts as a translation on $\CC$.

The group $\SLC$ acts on $\CC^2$ by matrix multiplication. Hence $\Gm$ also acts on spinors. 

It is important for us to understand the action of $\langle B\rangle$ on $\hat{\CC}$ and $\CC^2$. To do so, we will consider the isometric circles of $B$ and $B^{-1}$.

Let $F=\smqty(a&b\\c&d)\in\SLC$ and suppose $c\neq 0$. Then $F(\infty)\neq\infty$. The isometric circle of $F$, denoted $\Ii(F)$ is the set of points in $\hat{\CC}$ such that $|F'(z)|=1$; see~\cite[p19-21]{mar07}. Since $F'(z)=\frac{1}{(cz+d)^2}$, we have 
\[\Ii(F)=\{z\in\hat{\CC}\mid |cz+d|=1\}=\left\{z\in\CC\mid \left|z+\frac{d}{c}\right|=\frac{1}{|c|}\right\}.\]

Let $\Bb(F)$ be the disk in $\CC$ bounded by $\Ii(F)$. Then $\Bb(F)=\{z\in\CC\mid\left|z+\frac{d}{c}\right|<\frac{1}{|c|}\}$.

\begin{lemma}
Suppose $\h$ is a canonical horosphere of the canonical hyperbolic structure on $S^3\setminus K$ with centre contained in $\Bb(F)\setminus\{\frac{-d}{c}\}$. Then the height of $F(\h)$ is strictly larger than the height of $\h$.
\end{lemma}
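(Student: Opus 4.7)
The plan is to translate the geometric claim into a direct calculation using the spinor--horosphere correspondence recalled in \refsec{spinors}. Since the centre of $\h$ lies in $\Bb(F) \subset \CC$, it is a finite point of $\CC$, so I can describe $\h$ via a spinor $\smqty(\xi\\ \eta)$ with $\eta \neq 0$; then $\h$ is centred at $\xi/\eta$ and has Euclidean height $|\eta|^{-2}$.

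Next I would apply $F$ via the $\SLC$-equivariance of the spinor--horosphere correspondence: the image horosphere $F(\h)$ corresponds to the spinor
\[F\smqty(\xi\\ \eta) = \smqty(a\xi + b\eta\\ c\xi + d\eta).\]
The hypothesis $\xi/\eta \neq -d/c$ ensures that $c\xi + d\eta \neq 0$, so $F(\h)$ is again centred at a finite point of $\CC$, with Euclidean height $|c\xi + d\eta|^{-2}$.

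It then suffices to show $|c\xi + d\eta| < |\eta|$. Dividing both sides by the positive quantity $|\eta|$, this is equivalent to $|c(\xi/\eta) + d| < 1$, and factoring out $|c|$ gives the equivalent inequality $|\xi/\eta + d/c| < 1/|c|$. But this is precisely the defining condition for $\xi/\eta$ to lie in the open disk $\Bb(F)$, which is our hypothesis. Taking reciprocals of the squared norms then yields the desired strict inequality of Euclidean heights.

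The computation is essentially routine once the spinor framework is in place, so I do not anticipate any serious obstacle; the only things to check are that $\eta \neq 0$ (automatic from the centre being finite) and that $c\xi + d\eta \neq 0$ (automatic from excluding $-d/c$). The geometric content of the lemma is simply that the interior of the isometric circle is the locus where $|F'(z)| > 1$, i.e., where $F$ acts as a strict Euclidean expansion on $\CC$; via the spinor action on horospheres, this local expansion is precisely what forces the Euclidean height of the image horosphere to strictly exceed that of the original.
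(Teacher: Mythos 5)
Your proposal is correct and follows essentially the same route as the paper's proof: represent $\h$ by a spinor $\smqty(\xi\\ \eta)$, compute the height of $F(\h)$ as $|c\xi+d\eta|^{-2}$ via equivariance, and deduce the strict inequality directly from the defining condition $|\xi/\eta + d/c| < 1/|c|$ of the open disk $\Bb(F)$. The only cosmetic difference is that you rearrange the inequality as $|c\xi+d\eta|<|\eta|$ before taking reciprocals, whereas the paper writes the same computation as a single chain of equalities and one inequality.
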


\begin{proof}
Suppose that a spinor associated to $\h$ is given by $\xe$. Then $\h$ has centre $\xef$ and height $\frac{1}{|\eta|^2}$. Since $\xef\in\Bb(F)\setminus\{\frac{-d}{c}\}$, we have $0<|\xef+\frac{d}{c}|<\frac{1}{|c|}$. Now $F\xe=\smqty(a\xi+b\eta\\c\xi+d\eta)$, so $F(\h)$ has centre $\frac{a\xi+b\eta}{c\xi+d\eta}\in\CC$ and height $\frac{1}{|c\xi+d\eta|^2}$. Then
\[\frac{1}{|c\xi+d\eta|^2}=\frac{1}{|c\eta|^2|\xef+\frac{d}{c}|^2}>\frac{|c|^2}{|c\eta|^2}=\frac{1}{|\eta|^2}.\]
\end{proof}

Note that, if $\xef=\frac{-d}{c}$, then $F(\frac{-d}{c})=\infty$ and $F$ sends a horosphere centred at $\frac{-d}{c}$ to a horosphere centred at $\infty$. Moreover, if $\xef\in\Ii(F)$, then the height of the horosphere corresponding to $\xe$ is equal to that of $F\xe$.

It is a property of the isometric circle that $F(\Ii(F))=\Ii(F^{-1})$, and that $F(\Bb(F))$ is equal to the exterior of $\Ii(F^{-1})$ in $\hat{\CC}$.

In particular, for the matrix $B$ of \refeqn{AB_defn},
\[\Ii(B)=\{z\in\CC\mid |-\mg z+1|=1\}=\{z\in\CC\mid |z-\mg^2|=1\},\]
and
\[\Ii(B^{-1})=\{z\in\CC\mid |\mg z+1|=1\}=\{z\in\CC\mid |z+\mg^2|=1\}.\]

Let $\Pp$ be the pencil of circles in $\hat{\CC}$ which are mutually tangent at $0$ and whose centre lies on the line $\Ll=\{z\in\hat{\CC}\mid z=\ell\mg^2, \ell\in\hat{\RR}\setminus\{0\}\}$. Let $\Pp^\perp$ be the pencil of circles in $\hat{\CC}$ which are mutually tangent at $0$ and whose centre lies on the line $\Ll^\perp=\{z\in\hat{\CC}\mid z=m(1-\mg), m\in\hat{\RR}\setminus\{0\}\}$. Note that the lines $\Ll$ and $\Ll^\perp$ are orthogonal, and both $\Ii(B)$ and $\Ii(B^{-1})$ belong to $\Pp$. Each circle of $\Pp^\perp$ is orthogonal to every circle of $\Pp$ at their two points of intersection.

If $z\in\Cc$ where $\Cc\in\Pp^\perp$, then $B(z)\in\Cc$. The action of $\langle B\rangle$ on $\hat{\CC}$ can be described as translations along circles of $\Pp^\perp$.

\begin{example}
Let $\h$ be the horosphere of height 1 centred at 1. We will consider its orbit under the action of $\langle B\rangle$. The action by M\"obius transformation gives $B^k(1)=\frac{1}{-k\mg+1}=\frac{k+1+k\mg}{k^2+k+1}$ for $k\in\ZZ$.

\begin{figure}[h]
\centering
$$\vcenter{\hbox{\begin{overpic}[scale = 2]{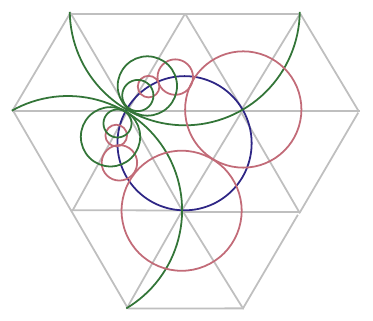}
 \put(290, 260){$\mathcal{I}(B^{-1})$}
\put(150, 20){$\mathcal{I}(B)$}
\put(-10, 196){\contour{black}{\textcolor{white}{$\pmb{-1}$}}}
\put(116, 196){\contour{black}{\textcolor{white}{$\pmb 0$}}}
\put(228, 196){\contour{black}{\textcolor{white}{$\pmb 1$}}}
\put(338, 196){\contour{black}{\textcolor{white}{$\pmb{2}$}}}
\put(60, 284){\contour{black}{\textcolor{white}{$\pmb \omega$}}}
\put(156, 284){\contour{black}{\textcolor{white}{$\pmb{-\omega^2}$}}}
\put(269, 284){\contour{black}{\textcolor{white}{$\pmb{2+\omega}$}}}
\put(60, 100){\contour{black}{\textcolor{white}{$\pmb {\omega^2}$}}}
\put(156, 100){\contour{black}{\textcolor{white}{$\pmb{-\omega}$}}}
\put(260, 100){\contour{black}{\textcolor{white}{$\pmb{1-\omega}$}}}
\put(80, 0){\contour{black}{\textcolor{white}{$\pmb {-1-2\omega}$}}}
\put(210, 0){\contour{black}{\textcolor{white}{$\pmb -2\omega$}}}
\put(220, 270){\contour{white}{\fontsize{12}{12}{\textcolor{red}{$\binom{1+\omega}{1+\omega}$}}}}
\put(150, 262){\contour{white}{\fontsize{12}{12}{\textcolor{red}{$\binom{1+\omega}{2+\omega}$}}}}
\put(114, 244){\contour{white}{\fontsize{12}{12}{\textcolor{red}{$\binom{1+\omega}{3+\omega}$}}}}
\put(78, 80){\contour{white}{\fontsize{12}{12}{\textcolor{red}{$\binom{1+\omega}{\omega}$}}}}
\put(44, 140){\contour{white}{\fontsize{12}{12}{\textcolor{red}{$\binom{1+\omega}{-1+\omega}$}}}}
\put(46, 180){\contour{white}{\fontsize{12}{12}{\textcolor{red}{$\binom{1+\omega}{-2+\omega}$}}}}
\end{overpic} }} $$
\caption{The green circles are $B^k(\Ii(B))$. Each of the green circles belongs to $\Pp$. The blue circle $\Cc$ belongs to $\Pp^\perp$ and is orthogonal to each green circle. The red circles are the horospheres $B^k(\h)$, and they are labelled by their corresponding spinors. Note that the centre of each horosphere lies on the intersection of $\Cc$ with a green circle}
\end{figure}

Let $\Cc$ be the circle of $\Pp^\perp$ which passes through $1$. Then $\Cc=\{z\in\CC\mid |z-(\frac{1-\mg}{3})|=\frac{1}{\sqrt{3}}\}$. Notice that the centre of $B^k(\h)$ lies on $\Cc$ for each $k\in\ZZ$.

In the canonical hyperbolic structure, the spinor corresponding to the horosphere centred at $\infty$ is $\smqty(1\\0)$. Hence a spinor corresponding to the horosphere centred at $0$ is $C\smqty(1\\0)=\smqty(0\\1+\mg)$. The spinor corresponding to the horosphere $\h$ centred at $1$ is $AC\smqty(1\\0)=\smqty(1+\mg\\1+\mg)$. Then the action on spinors gives
\[B^k\mqty(1+\mg\\1+\mg)=\mqty(1&0\\-k\mg&1)\mqty(-\mg^2\\-\mg^2)=\mqty(-\mg^2\\k-\mg^2)=\mqty(1+\mg\\k+1+\mg).\]
The centre of the corresponding horosphere $B^k(\h)$ is at $\frac{-\mg^2}{k-\mg^2}=\frac{k+1+k\mg}{k^2+k+1}$, and the horosphere has height $\frac{1}{k^2+k+1}$. 
Since the centre $1$ of $\h$, lies on $B(\Ii(B)) = \Ii(B^{-1})$, the centre of $B^k(\h)$ lies on $B^{k+1}(I(B))$.
Moreover,
\[B^k(\Ii(B))=\left\{z\in\CC\mid \left|z+\frac{\mg^2}{2k-1}\right|=\frac{1}{|2k-1|}\right\},\]
and the centre of the horosphere $B^k(\h)$ lies at the nonzero intersection of $B^{k+1}(\Ii(B))$ with $\Cc$.

As $B$ acts conformally on $\hat{\CC}$, the directions of the line-fields at the north pole on each horosphere $B^k(\h)$ when projected onto $\CC$ make constant angles with each of the circles from $\Pp$ and $\Pp^\perp$ on which they lie. \hfill\qed
 \end{example}

Recall that $\Re(a+b\mg)=a-\frac{b}{2}$ and $\Im(a+b\mg)=\frac{b\sqrt{3}}{2}$. Let 
\begin{eqnarray*}
\Dd&=&\{z\in\CC\mid 0\leq\Re(z)\leq 1, 0\leq\Im(z)\leq\sqrt{3}\} \\
&&\cup\{z\in\CC\mid -1\leq\Re(z)\leq 0, -\sqrt{3}\leq\Im(z)\leq 0\}.
\end{eqnarray*}

See \reffig{fund_domain}.
Then $\Dd$ is a fundamental domain for the action of $\Gm_\infty$ on $\CC$. 
This may seem an unusual choice of fundamental domain but it has the useful property that 
\[
\Dd\subset \Bb(B)\cup\Ii(B)\cup\Bb(B^{-1})\cup\Ii(B^{-1}).
\]
Note that
\[
\Dd\cap\Ii(B)=\{0,-1,-2-2\mg,-1-2\mg\}
\quad \text{and} \quad
\Dd\cap\Ii(B^{-1})=\{0,1,2+2\mg,1+2\mg\}.
\]


\section{Eisenstein integer horospheres}
\label{Sec:Zw_horospheres}

\begin{lemma}
Let $\xe$ be in the orbit of $\smqty(1\\0)$ under the action of $\Gm$. Then $\xi$ and $\eta$ are relatively prime Eisenstein integers.
\end{lemma}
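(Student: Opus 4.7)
The plan is to reduce the lemma to two facts already recorded in the paper: that the Riley representation lands in $\SLw$, and that $\Zw$ is a principal ideal domain. From \refsec{fig8}, $\Gm$ is generated by $A = \smqty(1&1\\0&1)$ and $B = \smqty(1&0\\-\mg&1)$, both with entries in $\Zw$, so every $F \in \Gm$ may be written as $F = \smqty(a&b\\c&d)$ with $a,b,c,d \in \Zw$ and $ad-bc = 1$. Applying $F$ to the base spinor picks out the first column:
\[
F\mqty(1\\0) = \mqty(a\\c).
\]
This already shows $\xi, \eta \in \Zw$, handling the integrality half of the lemma for free.

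For relative primality, I would invoke the fact from \refsec{Zw} that $\Zw$ is a Euclidean domain, in particular a PID. Suppose $\delta \in \Zw$ divides both $\xi = a$ and $\eta = c$. Then $\delta$ divides the $\Zw$-linear combination $ad - bc = \det F = 1$, so $\delta$ is a unit of $\Zw$. Hence the only common divisors of $\xi$ and $\eta$ are units, which is exactly the definition of relatively prime in $\Zw$ given in \refsec{Zw}.

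There is no genuine obstacle here; the content of the lemma is essentially the observation that $\SLw$-translates of $\smqty(1\\0)$ are the primitive column vectors of $\Zw^2$. The real work lies in the converse direction --- showing that every such primitive spinor, up to a unit multiple, is actually realized by $\Gm$ and not merely by $\SLw$ --- but that appears to be the subject of the subsequent results in \refsec{Zw_horospheres} and relies on the isometric-circle analysis developed in \refsec{holonomy_action}. The present lemma, by contrast, needs nothing beyond $\Gm \subset \SLw$ and the determinant identity.
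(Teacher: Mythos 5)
Your proof is correct, and it takes a genuinely different route from the paper's. The paper argues via the generators: it writes $W$ as a word in $A$, $B$ and their inverses, computes the action of each generator on a spinor $\smqty(\xi\\ \eta)$, observes that each such move preserves the set of common divisors of the two entries, and concludes by induction from the relatively prime pair $(1,0)$. You instead observe that $W\smqty(1\\0)$ is the first column $\smqty(a\\c)$ of $W = \smqty(a&b\\c&d) \in \SLw$, and that any common divisor of $a$ and $c$ divides $ad-bc=1$, hence is a unit. Your argument is shorter and strictly more general --- it shows that \emph{every} element of $\SLw$, not just of $\Gm$, sends $\smqty(1\\0)$ to a primitive vector, and it does not even need the PID/Euclidean structure you invoke (divisibility of the determinant already forces $\delta$ to be a unit). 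What the paper's generator-by-generator computation buys is reuse: the explicit formulas for $A^{\pm 1}\smqty(\xi\\\eta)$ and $B^{\pm 1}\smqty(\xi\\\eta)$, and the fact that these moves preserve common divisors, are exactly the ingredients of the descent argument in the converse lemma that follows, so the paper is front-loading that bookkeeping. As you correctly note, the converse direction is where the real content lies and where membership in $\Gm$ (rather than all of $\SLw$) actually matters.
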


\begin{proof}
By assumption, there exists $W\in\Gm$ such that $W\smqty(1\\0)=\smqty(\xi\\ \eta)$. Since $W\in\SLw$, it is clear that $\xi,\eta\in\Zw$. Since $\Gm$ is generated by $A$ and $B$, the matrix $W$ can be written as a product of $A$, $B$, and their inverses. 

We have
\[A\mqty(\xi\\ \eta)=\mqty(\xi+\eta\\ \eta), A^{-1}\mqty(\xi\\ \eta)=\mqty(\xi-\eta\\ \eta), B\mqty(\xi\\ \eta)=\mqty(\xi\\-\mg\xi+\eta), B^{-1}\mqty(\xi\\ \eta)=\mqty(\xi\\ \mg\xi+\eta).\]
Then $\dt$ is a common divisor of $\xi$ and $\eta$ if and only if $\dt$ is a common divisor of $\xi\pm\eta$ and $\eta$. Similarly, $\dt$ is a common divisor of $\xi$ and $\eta$ if and only if $\dt$ is a common divisor of $\xi$ and $\pm\mg\xi+\eta$.

Since $1$ and $0$ are relatively prime, it follows that $\xi$ and $\eta$ are relatively prime.
\end{proof}

\begin{lemma}
Let $\xi$ and $\eta$ be relatively prime Eisenstein integers. Then there exists $W\in\Gm$ such that $W\smqty(1\\0)=u\smqty(\xi\\ \eta)$ for some unit $u\in U$.
\end{lemma}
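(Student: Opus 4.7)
The approach is to mirror the preceding lemma by running a Euclidean-algorithm-style reduction in $\Gm$, using the isometric-circle machinery of Section~5 to measure progress. Given a relatively prime pair $(\xi,\eta)$, I aim to find $W\in\Gm$ and a unit $u\in U$ such that $W\smqty(1\\0)=u\smqty(\xi\\\eta)$. I induct on $N(\eta)$. The base case $\eta=0$ forces $\xi$ to be a unit, and then $W=I$ works with $u=\xi$, since $\smqty(\xi\\0)=\xi\smqty(1\\0)$.

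For the inductive step, assume $\eta\neq 0$. First apply an element of $\Gm_\infty=\langle A,L\rangle$ to translate $\xi/\eta$ into the fundamental domain $\Dd$. Since $\Dd\subset\overline{\Bb(B)}\cup\overline{\Bb(B^{-1})}$, the translated ratio lies in the closure of an isometric disk. If $N(\eta)\geq 3$, so $\eta$ is not a unit, then $\xi/\eta$ cannot equal any of the Eisenstein-integer corners of $\Dd\cap(\Ii(B)\cup\Ii(B^{-1}))$, since any such corner would force $\eta\mid\xi$ and contradict relative primality. Hence $\xi/\eta$ lies in the interior of $\Bb(B)$ or $\Bb(B^{-1})$, and by the first lemma of Section~5, applying $B$ or $B^{-1}$ either yields $\eta'=0$ or produces $N(\eta')<N(\eta)$, closing the induction.

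The anticipated main obstacle is the residual case in which $\eta$ is a unit. Scaling by $\eta^{-1}$, I may assume $\eta=1$. The centre-of-disk cases $\xi=\mg^2$ and $\xi=-\mg^2$ are immediate: $B\smqty(\mg^2\\1)=\smqty(\mg^2\\0)$ and $B^{-1}\smqty(-\mg^2\\1)=\smqty(-\mg^2\\0)$. For general $\xi=a+b\mg$ with $b\neq 0$, applying $A^{-a}$ reduces to $(b\mg,1)$, and then $B$ or $B^{-1}$ yields $\eta'=1\mp b\mg^2$ of norm $1\pm b+b^2$; for any $b\neq 0$, at least one of these two norms is $\geq 3$, returning us to the generic inductive case above. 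This handles all but the finitely many small spinors corresponding to Eisenstein-integer corners of $\Dd$ with $\eta=1$, each disposed of by an explicit short word in the generators—for instance, $B^{-1}AB\smqty(-1\\1)=\smqty(\mg\\0)$ handles the corner $\xi=-1$, and $C\smqty(1\\0)=\smqty(0\\1+\mg)$ handles $\xi=0$. The main technical challenge is organizing the case analysis so the induction terminates, which it does because only finitely many corner spinors require explicit treatment, and each is verified by direct computation.
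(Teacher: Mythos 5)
Your generic case ($N(\eta)\geq 3$) is correct and is essentially the paper's argument: translate $\xi/\eta$ into $\Dd$, use relative primality to rule out the seven Eisenstein-integer points of $\Dd\cap(\Ii(B)\cup\Ii(B^{-1}))$, and apply $B^{\pm 1}$ to strictly decrease $N(\eta)$. The gap is in your residual case $N(\eta)=1$, and it is genuine: the moves you prescribe do not terminate. Structurally, you increase $N(\eta)$ from $1$ to some value $\geq 3$ and then invoke ``the generic inductive case,'' but the inductive hypothesis available while proving the case $N(\eta)=1$ covers only $N(\eta)=0$; a spinor with $N(\eta')\geq 3$ is not covered, and the subsequent descent can land back on a spinor with $N(\eta)=1$ --- indeed on the very spinor you started from. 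Concretely, take $\smqty(\mg\\1)$, so $a=0$, $b=1$. Your rule selects $B$ (the option of norm $1+b+b^2=3$), giving $B\smqty(\mg\\1)=\smqty(\mg\\2+\mg)$. The ratio $\mg/(2+\mg)=(1+2\mg)/3=i/\sqrt{3}$ already lies in $\Dd$ and lies in $\Bb(B^{-1})$ but not $\Bb(B)$, so the generic step applies $B^{-1}$, which returns you to $\smqty(\mg\\1)$. Your algorithm cycles forever on this input, and the induction does not close.

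The repair is what the paper does: in the unit case, translate the Eisenstein integer $\xi/\eta$ into $\Dd$ using all of $\Gm_\infty$, not merely the power $A^{-a}$ that normalises the $\ZZ$-part. The Eisenstein integers lying in $\Dd$ are exactly the two disk centres $\pm\mg^2$ together with the seven points of $\Dd\cap(\Ii(B)\cup\Ii(B^{-1}))$. The centres are sent to $\infty$ by a single application of $B^{\mp 1}$ (as you observe), and each of the seven boundary points is sent to $\infty$ by an explicit short word in $A^{\pm1},B^{\pm1}$ (the paper lists all seven; your $\infty=B^{-1}AB(-1)$ and $\infty=B^{-1}ABA^{-1}(0)$ are two of them, but you must also supply words for $\pm(1+2\mg)$ and $\pm(2+2\mg)$, which your $A^{-a}$ normalisation never reaches). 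With that replacement the residual case terminates in a bounded number of steps and the induction closes; for your problematic input, the correct move is $A$ followed by $B^{-1}$, since $A(\mg)=-\mg^2$ is the centre of $\Bb(B^{-1})$.
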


The idea of the proof is to apply $B$ or $B^{-1}$ whenever possible to increase the magnitude of the second entry of the corresponding spinor, increasing the height of the associated horosphere, as seen in the upper half space model. If not possible, apply an element of $\Gm_\infty$ to move to somewhere that it is possible. Eventually the second entry of the associated spinor decreases to $0$, and the associated horosphere is centred at $\infty$, and we arrive at our original spinor $\smqty(1\\0)$.

\begin{proof}
Define a function $f\from\Zw^2\to\NN\cup\{0\}$ by $f(\kp)=|\eta|^2$ where $\kp=\xe$ is a spinor. If $\h$ is the spin-decorated horosphere corresponding to $\kp$, then $f(\kp)$ is the reciprocal of the height of $\h$ whenever $\eta\neq 0$, and $f(\kp)=0$ when $\eta=0$. Note that $f(\kp)$ is just the norm of $\eta$: if $\eta=a+b\mg$, then $|\eta|^2=a^2-ab+b^2\geq 0$. 

\begin{figure}[h]
$$\vcenter{\hbox{\begin{overpic}[scale = 1.5]{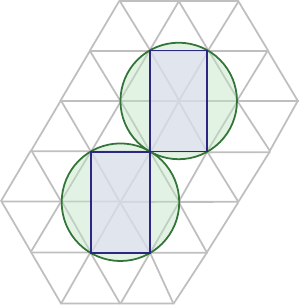}
\put(81, 50){$\mathcal{D}$}
\put(123, 120){$\mathcal{D}$}
\put(55, 105){\contour{black}{\textcolor{white}{$\pmb{-1}$}}}
\put(106, 105){\contour{black}{\textcolor{white}{$\pmb 0$}}}
\put(147, 105){\contour{black}{\textcolor{white}{$\pmb 1$}}}
\put(193, 105){\contour{black}{\textcolor{white}{$\pmb{2}$}}}
\put(85, 145){\contour{black}{\textcolor{white}{$\pmb \omega$}}}
\put(115, 145){\contour{black}{\textcolor{white}{$\pmb {-\omega^2}$}}}
\put(160, 145){\contour{black}{\textcolor{white}{$\pmb {2+\omega}$}}}
\put(90, 180){\contour{black}{\textcolor{white}{$\pmb {1+2\omega}$}}}
\put(140, 180){\contour{black}{\textcolor{white}{$\pmb {2+2\omega}$}}}
\put(38, 35){\contour{black}{\textcolor{white}{$\pmb{-2-2\omega}$}}}
\put(20, 70){\contour{black}{\textcolor{white}{$\pmb -2-\omega$}}}
\put(82, 70){\contour{black}{\textcolor{white}{$\pmb{\omega^2}$}}}
\put(117, 70){\contour{black}{\textcolor{white}{$\pmb{-\omega}$}}}
\put(92, 35){\contour{black}{\textcolor{white}{$\pmb -1-2\omega$}}}
\end{overpic} }} $$
\centering
\caption{The isometric circles of $B$ and $B^{-1}$ and their relationship to $\Dd$.}
\label{Fig:fund_domain}
\end{figure}

Let $\kp_i=\smqty(\xi_i\\ \eta_i)$. Set $\kp_0=\xe$. 

If $\frac{\xi_i}{\eta_i}\in\Bb(B)$, let $\kp_{i+1}=B(\kp_i)$. Then, by lemma, $f(\kp_{i+1})=f(B(\kp_i))<f(\kp_i)$. Set $W_i=B$.

If $\frac{\xi_i}{\eta_i}\in\Bb(B^{-1})$, let $\kp_{i+1}=B^{-1}(\kp_i)$. Then, by lemma, $f(\kp_{i+1})=f(B^{-1}(\kp_i))<f(\kp_i)$. Set $W_i=B^{-1}$.

If $\frac{\xi_i}{\eta_i}\notin\Bb(B)\cup\Bb(B^{-1})$, $\frac{\xi_i}{\eta_i}\notin\Dd$ and $\frac{\xi_i}{\eta_i}\neq\infty$, let $\kp_{i+1}=V_i(\kp_i)$, where $V_i\in\Gm_\infty$ such that $V_i(\frac{\xi_i}{\eta_i})\in\Dd$. Then $V_i=A^kL^m$ for some $k,m\in\ZZ$. Recall that $L$ is a product of $A$, $B$ and their inverses. Note that $f(\kp_{i+1})=f(V_i(\kp_i))=f(\kp_i)$, since $V_i=\pm\smqty(1&\af\\0&1)$ for some $\af\in\Zw$. Set $W_i=V_i$.

Continue applying $B$, $B^{-1}$, and $\Gm_\infty$ is this way until either $\frac{\xi_i}{\eta_i}=\infty$, or $\frac{\xi_i}{\eta_i}\in\Dd\cap(\Ii(B)\cup\Ii(B^{-1}))$. Note that at every stage $f(\kp_{i+1})=f(W_i(\kp_i))\leq f(\kp_i)$.

In the latter case, there are seven possibilities for $\frac{\xi_i}{\eta_i}$, and the necessary transformations are given by the following:
\begin{align*}
&&
\infty&=BAB^{-1}(1+2\mg)&
\infty&=BAB^{-1}A^{-1}(2+2\mg)\\
\infty&=B^{-1}AB(-1)&
\infty&=B^{-1}ABA^{-1}(0)&
\infty&=BA^{-1}B^{-1}(1)\\
\infty&=B^{-1}A^{-1}BA(-2-2\mg)&
\infty&=B^{-1}A^{-1}B(-1-2\mg)&
&
\end{align*}
In each of these cases, the appropriate $W_i$ is a product of $A$, $B$ and their inverses, and takes a point of $\Dd\cap(\Ii(B)\cup\Ii(B^{-1}))$ to $\infty$.

Eventually $\frac{\xi_n}{\eta_n}=\infty$, so $\eta_n=0$ and $f(\kp_n)=0$. Since $\xi$ and $\eta$ are relatively prime, it follows by previous lemma that $\xi_n$ and $\eta_n$ are relatively prime. Thus $\xi_n=u$ for some $u\in U$. Let $W^{-1}=W_{n-1}W_{n-2}\ldots W_1W_0$. Then $W^{-1}\xe=\smqty(u\\0)$, so setting $W=(W^{-1})^{-1}$, we have $W\smqty(1\\0)=u^{-1}\xe$ as required.
\end{proof}

Theorem 1.1 is a restatement of the above two lemmas. The following is known~\cite{hat83} but we record it anyway.

\begin{corollary}
\label{Cor:horosphere_centres}
In the canonical hyperbolic structure on $S^3\setminus K$, there is a horosphere centred at every point of $\QQ(\mg)\cup\{\infty\}\subset\hat{\CC}$.
\end{corollary}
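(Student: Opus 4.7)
The plan is to combine \refthm{Zw_horospheres} (the statement of which is already established by the two preceding lemmas) with elementary properties of $\Zw$ as a Euclidean domain. The canonical horospheres of $S^3\setminus K$ are exactly those arising from spinors in the $\Gm$-orbit of $\smqty(1\\0)$, and the centre of the horosphere associated to $\smqty(\xi\\ \eta)$ is $\xi/\eta$ if $\eta\neq 0$, and $\infty$ if $\eta=0$. Multiplying a spinor by a unit $u\in U$ does not change its centre, since $u\xi/(u\eta)=\xi/\eta$. Thus the set of centres of canonical horospheres equals
\[
\left\{\tfrac{\xi}{\eta} \;\middle|\; \xi,\eta\in\Zw \text{ relatively prime}, \eta\neq 0\right\}\cup\{\infty\}.
\]

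First I would verify the inclusion into $\QQ(\mg)\cup\{\infty\}$: this is immediate since any quotient of Eisenstein integers lies in the field of fractions $\QQ(\mg)$. The horosphere corresponding to $\smqty(1\\0)$ itself has centre $\infty$, so $\infty$ is also realised.

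Next I would prove the reverse inclusion. Given any $q\in\QQ(\mg)$, write $q=\alpha/\beta$ with $\alpha,\beta\in\Zw$ and $\beta\neq 0$. Because $\Zw$ is a Euclidean domain (as recalled in \refsec{Zw}), it is a PID, so $\alpha$ and $\beta$ possess a greatest common divisor $\dt\in\Zw$; setting $\xi=\alpha/\dt$ and $\eta=\beta/\dt$ yields relatively prime Eisenstein integers with $\xi/\eta=q$ and $\eta\neq 0$. By the second lemma of \refsec{Zw_horospheres} there exists $W\in\Gm$ and $u\in U$ with $W\smqty(1\\0)=u\smqty(\xi\\ \eta)$, so the canonical horosphere corresponding to this spinor is centred at $\xi/\eta=q$, as required.

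There is no real obstacle here; the work has already been done in establishing \refthm{Zw_horospheres}. The corollary is really just the observation that the quotients of coprime Eisenstein integers exhaust $\QQ(\mg)$, together with the fact that $\infty$ comes from the reference spinor $\smqty(1\\0)$.
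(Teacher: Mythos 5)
Your proposal is correct and follows essentially the same route as the paper's proof: reduce an arbitrary element of $\QQ(\mg)$ to a quotient of relatively prime Eisenstein integers by dividing out a gcd (using that $\Zw$ is a Euclidean domain), then invoke \refthm{Zw_horospheres} to realise that quotient as the centre of a canonical horosphere. The extra remarks about the forward inclusion and the point $\infty$ are harmless but not needed for the statement as phrased.
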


\begin{proof}
Let $\alpha\in\QQ(\mg)$. Then there exist $\frac{c}{d},\frac{e}{f}\in\QQ$ such that $\alpha=\frac{c}{d}+\frac{e}{f}\mg=\frac{cf+de\mg}{df}$. Here both $cf+de\mg$ and $df$ belong to $\Zw$. Let $\delta$ be a greatest common divisor of $cf+de\mg$ and $df$ in $\Zw$. Let $\xi=\frac{cf+de\mg}{\delta}$ and let $\eta=\frac{df}{\delta}$. Then $\xi$ and $\eta$ are relatively prime and $\alpha=\frac{\xi}{\eta}$. 

Thus by the theorem, there exists $W\in\Gm$ and $u\in U$ such that $W\smqty(1\\0)=u\smqty(\xi\\ \eta)$. The centre of the horosphere corresponding to the spinor $\smqty(u\xi\\u\eta)$ is at $\alpha$.
\end{proof}


\section{Lambda lengths}
\label{Sec:lambda_lengths}

Let $\kp=\xe$ and $\kp'=\smqty(\xi'\\ \eta')$ be nonzero spinors where $\kp$ is not a multiple of $\kp'$, nor $\kp'$ a multiple of $\kp$, and let $\h$ and $\h'$ be the corresponding spin-decorated horospheres. There is a geodesic $\gm$ in $\HH^3$ with endpoints at the centres of  $\h$ and $\h'$. Suppose $\gm$ is oriented from $\frac{\xi}{\eta}$ to $\frac{\xi'}{\eta'}$. Then $\gm$ intersects $\h$ and $\h'$ orthogonally at the points $p$ and $p'$ respectively. There is an inward frame $F$ at $p$, with first component given by the tangent to $\gm$ at $p$, the second by the decoration on $\h$ at $p$, and the third by their cross-product in $\HH^3$. Similarly, there is an outward frame $F'$ at $p'$. Let $\rho$ be the distance along $\gm$ between $p$ and $p'$. If we parallel transport the frame $F$ along $\gm$ to $p'$, then the first vectors of the frames will agree, but there will be an angle $\theta$ between the vectors describing the decorations. This angle $\theta$ is well defined modulo $2\pi$. If the frames are lifted to the double cover and describe the spin decorations given by $\kp$ and $\kp'$, then $\theta$ is well defined modulo $4\pi$. Define the complex distance between $\h$ and $\h'$ to be $d=\rho+i\theta$. Then we define the complex $\md$-length from $\h$ to $\h'$ to be $\md=e^{d/2}$. See \reffig{3_of_mat23} and ~\cite{mat23} for further details.

\begin{figure}[h]
\def\svgwidth{0.38\columnwidth}
\begin{center}
\begingroup%
  \makeatletter%
  \providecommand\color[2][]{%
    \errmessage{(Inkscape) Color is used for the text in Inkscape, but the package 'color.sty' is not loaded}%
    \renewcommand\color[2][]{}%
  }%
  \providecommand\transparent[1]{%
    \errmessage{(Inkscape) Transparency is used (non-zero) for the text in Inkscape, but the package 'transparent.sty' is not loaded}%
    \renewcommand\transparent[1]{}%
  }%
  \providecommand\rotatebox[2]{#2}%
  \newcommand*\fsize{\dimexpr\f@size pt\relax}%
  \newcommand*\lineheight[1]{\fontsize{\fsize}{#1\fsize}\selectfont}%
  \ifx\svgwidth\undefined%
    \setlength{\unitlength}{247.70801514bp}%
    \ifx\svgscale\undefined%
      \relax%
    \else%
      \setlength{\unitlength}{\unitlength * \real{\svgscale}}%
    \fi%
  \else%
    \setlength{\unitlength}{\svgwidth}%
  \fi%
  \global\let\svgwidth\undefined%
  \global\let\svgscale\undefined%
  \makeatother%
  \begin{picture}(1,0.64212724)%
    \lineheight{1}%
    \setlength\tabcolsep{0pt}%
    \put(0,0){\includegraphics[width=\unitlength,page=1]{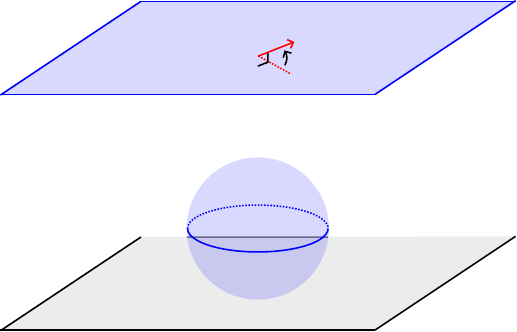}}%
    \put(0.77703916,0.58512719){\color[rgb]{0,0,0}\makebox(0,0)[lt]{\lineheight{1.25}\smash{\begin{tabular}[t]{l}$\mathfrak{h}'$\end{tabular}}}}%
    \put(0,0){\includegraphics[width=\unitlength,page=2]{complex_lambda_lengths.pdf}}%
    \put(0.45375586,0.10699279){\color[rgb]{0,0,0}\makebox(0,0)[lt]{\lineheight{1.25}\smash{\begin{tabular}[t]{l}$\mathfrak{h}$\end{tabular}}}}%
    \put(0,0){\includegraphics[width=\unitlength,page=3]{complex_lambda_lengths.pdf}}%
    \put(0.39200427,0.4027793){\color[rgb]{0,0,0}\makebox(0,0)[lt]{\lineheight{1.25}\smash{\begin{tabular}[t]{l}$\rho$\end{tabular}}}}%
    \put(0.57579002,0.51343701){\color[rgb]{0,0,0}\makebox(0,0)[lt]{\lineheight{1.25}\smash{\begin{tabular}[t]{l}$\theta$\end{tabular}}}}%
    \put(0,0){\includegraphics[width=\unitlength,page=4]{complex_lambda_lengths.pdf}}%
  \end{picture}%
\endgroup%

\caption{Complex distance between horospheres. This is figure 3 of \cite{mat23}.}
\label{Fig:3_of_mat23}
\end{center}
\end{figure}

There is a complex-valued antisymmetric bilinear form $\{-,-\}\from\CC^2\times\CC^2\to\CC$ on spinors defined by the determinant
\[\{\kp,\kp'\}=\mqty|\xi&\xi'\\ \eta&\eta'|.\]
The third author~\cite{mat23} has shown that $\{\kp,\kp'\}=e^{d/2}=\md$.

\begin{proof}[Proof of Theorem~\ref{Thm:lambda_lengths_horospheres}]
Let $\h$ and $\h'$ be spin-decorated horospheres in the canonical hyperbolic structure on $S^3\setminus K$, with corresponding spinors $\kp$ and $\kp'$. Then $\kp,\kp'\in\Zw^2$, so $\md=\{\kp,\kp'\}\in\Zw$.

Let $\alpha\in\Zw\setminus\{0\}$. Then by Theorem~\ref{Thm:Zw_horospheres}, there exists $W\in\Gm$ such that $W\smqty(1\\0)=u\smqty(1\\ \af)$ for some $u\in U$. Let $\h$ be the spin-decorated horosphere corresponding to $\smqty(1\\0)$ and let $\h'$ be the spin-decorated horosphere associated to $\smqty(u\\u\af)$. Then $\md=\smqty|1&u\\0&u\af|=u\af$.  
\end{proof}

\begin{proof}[Proof of Theorem~\ref{Thm:horosphere_distances}]
By Theorem~\ref{Thm:lambda_lengths_horospheres} there exist spin-decorated horospheres in the canonical hyperbolic structure on $S^3\setminus K$ such that $\md=u\af$ for some $u\in U$. Then the complex distance between them is
\[d=2\log\md=2\log(u\af)=2\log(u)+2\log(\af)=2\log(e^{k\pi i/3})+2\log(\af)=\frac{2\ell\pi i}{3}+ 2\log(\af),\]
for some $k,\ell\in\ZZ$.
Then the hyperbolic distance is 
\[\rho=\Re(d)=2\Re(\log(\af))=2\log|\af|.\]

Alternatively, let $\h$ and $\h'$ be horospheres with distinct centres. We may assume without loss of generality that $\h$ is centred at $\infty$ with corresponding spinor $\smqty(u\\0)$ for some $u\in U$, and $\h'$ is centred at $\xef$ with corresponding spinor $\xe$. Then $\h'$ has height $\frac{1}{|\eta|^2}$. Then the hyperbolic distance between $\h$ and $\h'$ is
\[\int_{\frac{1}{|\eta|^2}}^{1} \frac{1}{t}\dd{t}=-\log\frac{1}{|\eta|^2}=2\log|\eta|.\]
Choosing $\h'$ such that $\xe=\smqty(1\\ \af)$ gives a hyperbolic distance of $2\log|\af|$.
\end{proof}


\section{Further questions}
\label{Sec:questions}

We note several directions, which are the subject of further ongoing research.

Firstly, the figure-eight sister should be very similar, and we expect a similar result holds for its complete hyperbolic structure.

Secondly, consider the Whitehead link or Borromean rings. In this case $\Gm$ will be a finite index subgroup of the Picard group $\mathrm{PSL}_2(\ZZ[i])$. There are now infinitely many possible maximal cusp neighbourhoods, but an appropriate choice should still yield some results with connections to the Gaussian integers $\ZZ[i]$.

More generally, one may consider the links discussed by Hatcher in~\cite{hat83}. This will require consideration of the Bianchi groups $\mathrm{PSL}_2(\Oo_d)$ where $\Oo_d$ is the ring of integers in the field $\QQ(\sqrt{-d})$, and $d\in\{1,2,3,7,11\}$. Note that these $\Oo_d$ are Euclidean domains.

Finally, one may consider other $2$-bridge knot complements. These also have a holonomy group $\Gm$ generated by two parabolics $A$, $B$. Does a fundamental domain for the action of $\Gm_\infty$ on $\CC$ lie inside the isometric circles for $B$ and $B^{-1}$? Even if not, there are other parabolic elements in this group, which might be used to cover a fundamental domain of $\Gm_\infty$. However, these are not arithmetic hyperbolic manifolds, and the associated trace fields are no longer quadratic.

\bibliography{fig8lambdalengths}
\bibliographystyle{plain}

\end{document}